\theoremstyle{plain}
\newtheorem{thm}{Theorem}[section]
\newtheorem{lem}[thm]{Lemma}
\newtheorem{prop}[thm]{Proposition}
\newtheorem{cor}[thm]{Corollory}
\theoremstyle{definition}
\theoremstyle{remark}
\newtheorem{remark}[thm]{Remark}
\newtheorem{conj}[thm]{Conjecture}
\begin{document}

\title {\bf Periodicity related to a sieve method of producing primes}

\author{\it Haifeng Xu\thanks{Project supported by NSFC(Grant No. 11401515), the University Science Research Project of Jiangsu Province (14KJB110027) and the Foundation of Yangzhou University 2014CXJ004.},\quad Zuyi Zhang,\quad Jiuru Zhou}
\date{\small\today}

\maketitle

\begin{abstract}
In this paper we consider a slightly different sieve method from Eratosthenes' to get primes. We find the periodicity and mirror symmetry of the pattern. 
\end{abstract}

\noindent{\bf MSC2010:} 11A41.\\
{\bf Keywords:} sieve method, periodicity of pattern, mirror symmetry of pattern.


\section{Definitions and Notations}

Let $\{p_1,p_2,\ldots,p_n,\ldots\}$ be the set of all primes, where $p_n$ denotes the $n$-th prime number. $\pi(x)$ is the number of primes which are less than or equal to $x$. $\mathbb{N}$ is the set of positive numbers. Let
\[
M_{p_n}=\mathbb{N}-\{2k,3k,5k,\ldots,p_n k\mid k\in\mathbb{N}\},
\]
and $D_{p_n}$ be the set of difference of two consecutive numbers in $M_{p_n}$, that is,
\[
D_{p_n}=\{d_k\mid d_k=x_{k+1}-x_k, x_i\in M_{p_n}\},
\]
where $x_i$ is the $i$-th number in $M_{p_n}$.

For example, we list the first few numbers of $M_3$ and $D_3$.

\begin{tabular}{|c|c|c|c|c|c|c|c|c|c|c|c|c|c|c|}
\hline
$M_3$ & 1 & 5 & 7 & 11 & 13 & 17 & 19 & 23 & 25 & 29 & 31 & 35 & 37 & $\ldots$\\
\hline
$D_3$ & 4 & 2 & 4 & 2  & 4  & 2  & 4  & 2  & 4  & 2  & 4  & 2  & 4  & $\ldots$\\
\hline
\end{tabular}

We call the minimum subset $\mathcal{P}_3:=\{4,2\}$ of $D_3$ as the pattern of $D_3$ since it occurs periodically. We will prove this fact in the next section and also for the general situation. Hence, we put the general definitions of $P_{p_n}$ and $T_{p_n}$ in the proof of Theorem \ref{thm:periodicity} due to the induction.

The number of the elements in the pattern $\mathcal{P}_3$ for $D_3$ is called the period of $D_3$. We write it as $T_3=2$.

The length of the pattern $\mathcal{P}_{p_n}$ is defined as the sum of the elements in the pattern. We write it as $L(\mathcal{P}_{p_n})$.

\section{Periodicity of the pattern}

\begin{prop}\label{prop:1}
$M_3=\{u_n\}_{n=1}^{\infty}$ has the following property:
\[
\begin{cases}
u_{2k}=u_{2k-1}+4,\\
u_{2k+1}=u_{2k}+2,\\
\end{cases}
\]
where $k=1,2,\ldots$. In other words, $\{4,2\}$ is the pattern of $D_3$, so the period is $2$.
\end{prop}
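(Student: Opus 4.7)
The plan is to identify $M_3$ explicitly as a residue class description modulo $6$, and then simply enumerate its elements in increasing order to read off the successive differences.

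First I would unfold the definition: by construction $M_3 = \mathbb{N}\setminus(2\mathbb{N}\cup 3\mathbb{N})$, so $n\in M_3$ if and only if $\gcd(n,6)=1$. Reducing modulo $6$, the units modulo $6$ are precisely the residues $1$ and $5$, so $M_3$ is exactly the set of positive integers congruent to $1$ or $5\pmod{6}$.

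Next I would enumerate. For each $k\ge 1$ the interval $\bigl(6(k-1),\,6k\bigr]$ contains exactly two elements of $M_3$, namely $6(k-1)+1$ and $6(k-1)+5$, and these intervals partition $\mathbb{N}$. Listing the elements of $M_3$ in increasing order therefore gives $u_{2k-1}=6(k-1)+1$ and $u_{2k}=6(k-1)+5$ for all $k\ge 1$. A direct subtraction then yields $u_{2k}-u_{2k-1}=4$ and $u_{2k+1}-u_{2k}=(6k+1)-(6(k-1)+5)=2$, which are the two recurrences claimed. Since the differences alternate $4,2,4,2,\ldots$ with no shorter repeating block, $\{4,2\}$ is the pattern of $D_3$ and the period equals $2$.

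There is essentially no substantial obstacle here; the only thing to be careful about is the parity bookkeeping of the index so that $u_{2k-1}$ and $u_{2k}$ are correctly matched with the residues $1$ and $5$ modulo $6$. The argument is entirely self-contained and does not require the inductive machinery that will be needed for the general $p_n$ case in Theorem~\ref{thm:periodicity}.
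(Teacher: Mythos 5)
Your proof is correct and follows essentially the same route as the paper: both arguments reduce to showing that $M_3$ is exactly the set of positive integers congruent to $1$ or $5 \pmod 6$ (the paper via the two congruence systems mod $2$ and mod $3$, you via $\gcd(n,6)=1$), from which the alternating differences $4,2$ follow. If anything, your write-up is slightly more complete, since you explicitly enumerate $u_{2k-1}=6(k-1)+1$ and $u_{2k}=6(k-1)+5$ and compute the differences, a step the paper leaves implicit.
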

\begin{proof}
$u_n$ satisfies the equations:
\[
\begin{cases}
x\equiv 1(\text{mod}\ 2),\\
x\equiv 1(\text{mod}\ 3),\\
\end{cases}
\quad\text{or}\quad
\begin{cases}
x\equiv 1(\text{mod}\ 2),\\
x\equiv 2(\text{mod}\ 3).\\
\end{cases}
\]
The left infers that $x\equiv 1(\text{mod}\ 6)$. The right is:
\[
\begin{cases}
x=2k+1,\\
x=3\ell+2.\\
\end{cases}
\]
If $u_n=2k+1=3\ell+2$, then $2k=3\ell+1$, which infers that $\ell$ must be odd. Let $\ell=2h+1$, then $2k=3(2h+1)+1=6h+4$. Thus $2k+1=6h+5=3\ell+2$. Hence, $u_n\equiv 5(\text{mod}\ 6)$.
\end{proof}

\begin{thm}\label{thm:periodicity}
The period of $D_p$ is
\[
T_p=(2-1)(3-1)(5-1)\cdots(p-1).
\]
\end{thm}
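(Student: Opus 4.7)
The plan is to prove the theorem by induction on $n$, viewing $M_{p_n}$ as the set of positive integers coprime to the primorial $N_n := p_1 p_2 \cdots p_n$. Since $\gcd(x+N_n, N_n) = \gcd(x, N_n)$, the set $M_{p_n}$ is invariant under translation by $N_n$; writing $M_{p_n} = \{x_1 < x_2 < \cdots\}$, this forces $x_{k+T} = x_k + N_n$, where $T$ is the number of elements of $M_{p_n}$ in any window of length $N_n$. Hence the difference sequence $D_{p_n}$ is automatically periodic, and it remains only to compute this count.

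For the inductive step, assume $T_{p_{n-1}} = \prod_{i=1}^{n-1}(p_i - 1)$, with the length of one period (the sum of its gaps) equal to $N_{n-1}$. I would partition $[1, N_n]$ into $p_n$ consecutive blocks of length $N_{n-1}$; by the inductive periodicity each block contains exactly $T_{p_{n-1}}$ elements of $M_{p_{n-1}}$, for a total of $p_n T_{p_{n-1}}$. To pass from $M_{p_{n-1}}$ to $M_{p_n}$ one removes the multiples of $p_n$ contained in $[1, N_n]$. Since $\gcd(p_n, N_{n-1}) = 1$, a multiple $k p_n \leq N_n$ lies in $M_{p_{n-1}}$ iff $k \in M_{p_{n-1}} \cap [1, N_{n-1}]$, so exactly $T_{p_{n-1}}$ such multiples must be discarded. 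This yields $T_{p_n} = (p_n - 1) T_{p_{n-1}} = \prod_{i=1}^{n}(p_i - 1)$, with base case $T_{p_1} = 1$ coming directly from $M_2 = \{1,3,5,7,\ldots\}$ (and cross-checked against Proposition~\ref{prop:1} at $n=2$).

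The main obstacle I anticipate is not the counting itself but verifying that $T_{p_n}$ is the \emph{minimal} period, since $\mathcal{P}_{p_n}$ was defined as the minimum periodic subset of $D_{p_n}$. To close this gap I would argue that any valid period length $L$ of $M_{p_n}$ (meaning $x \in M_{p_n} \iff x+L \in M_{p_n}$ for all $x \geq 1$) must be a multiple of $N_n$. Indeed, for each prime $p_i \mid N_n$, use the Chinese Remainder Theorem to choose $x$ with $x \equiv -L \pmod{p_i}$ and $x \equiv 1 \pmod{p_j}$ for $j \neq i$; then $x \in M_{p_n}$, but $x+L \equiv 0 \pmod{p_i}$, which is incompatible with $x+L \in M_{p_n}$ unless $p_i \mid L$. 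Applying this to every $p_i$ forces $N_n \mid L$, so the minimal number of differences per period is exactly $\prod_{i=1}^{n}(p_i-1)$, completing the proof.
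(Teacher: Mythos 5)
Your proof is correct, and it takes a cleaner and in one respect stronger route than the paper's. The paper proceeds by explicitly constructing $\mathcal{P}_{p_{n+1}}$ from $\mathcal{P}_{p_n}$: it replicates the previous pattern $p_{n+1}$ times, observes that the $T_{p_n}$ deleted elements $p_{n+1}h$ ($h\in M_{p_n}$) fall periodically with period $p_{n+1}L(\mathcal{P}_{p_n})$, and merges the adjacent gaps at each deletion, giving $T_{p_{n+1}}=p_{n+1}T_{p_n}-T_{p_n}$; this yields the actual gap values (which the paper needs later for the mirror-symmetry and skip-multiplicity results) but is carried out only on the examples $D_5$ and $D_7$. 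You instead get periodicity a priori from the invariance of $M_{p_n}$ under translation by the primorial $N_n$, so that $T_{p_n}$ is just the count $\varphi(N_n)$ of residues coprime to $N_n$ in a window of length $N_n$, and your block-plus-deletion count reproduces the same recursion $(p_n-1)T_{p_{n-1}}$ without tracking the gaps themselves. Most importantly, you prove \emph{minimality} of the period via the CRT argument, which the paper never addresses even though it defines the pattern as the minimum periodically occurring subset; this is a genuine improvement. The only step worth making explicit is the bridge from set-periods to sequence-periods: if $D_{p_n}$ had period $t<T_{p_n}$, the sum $S$ of $t$ consecutive gaps would be an (eventual) translation period of $M_{p_n}$ with $0<S<N_n$, contradicting your divisibility result $N_n\mid S$; with that one line added, your argument is complete.
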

\begin{proof}
We prove this by induction. The case for $D_3$ has been proved. For simplicity, we try to explain the procedure by proving the case for $D_5$. First, we observe that
\[
M_{p_{n+1}}=M_{p_n}-\{p_{n+1}h\mid h\in M_{p_n}\}.
\]
We show this procedure (getting $M_5$ from $M_3$) in the following table (see Figure \ref{fig:M5}).\\

\begin{figure}[htbp]
\centering
\begin{tabular}{|c|c|c|c|c|c|c|c|c|c|c|}
\hline
1& {\bf\color{red}\sout{5}} & 7 & 11 & 13 & 17 & 19 & 23 & {\bf\color{blue}\sout{25}} & 29 & 31 \\
 & {\bf\color{red}\sout{35}} & 37 & 41 & 43 & 47& 49 & 53 & {\bf\color{blue}\sout{55}} & 59 & 61 \\
 & {\bf\color{red}\sout{65}} & 67 & 71 & 73 & 77 & 79 & 83 & {\bf\color{blue}\sout{85}} & 89 & 91 \\
 & {\bf\color{red}\sout{95}} & 97 & 101 & 103 & 107 & 109 & 113 & {\bf\color{blue}\sout{115}} & 119 & 121 \\
 & {\bf\color{red}\sout{125}} & 127 & 131 & 133 & 137& 139 & 143 & {\bf\color{blue}\sout{145}} & 149 & 151 \\
 & {\bf\color{red}\sout{155}} & 157 & 161 & 163 & 167 & 169 & 173 & {\bf\color{blue}\sout{175}} & 179 & 181\\
 & {\bf\color{red}\sout{185}} & 187 & 191 & 193 & 197 & 199 & 203 & {\bf\color{blue}\sout{205}} & 209 & 211\\
\hline
 & {\bf\color{red}\sout{215}} & 217 & 221 & 223 & 227 & 229 & 233 & {\bf\color{blue}\sout{235}} & 239 & 241\\
 & {\bf\color{red}\sout{245}} & 247 & 251 & 253 & 257 & 259 & 263 & {\bf\color{blue}\sout{265}} & 269 & 271\\
 & {\bf\color{red}\sout{275}} & 277 & 281 & 283 & 287 & 289 & 293 & {\bf\color{blue}\sout{295}} & 299 & 301\\
 & {\bf\color{red}\sout{305}} & 307 & 311 & 313 & 317 & 319 & 323 & {\bf\color{blue}\sout{325}} & 329 & 331\\
 & {\bf\color{red}\sout{335}} & 337 & 341 & 343 & 347 & 349 & 353 & {\bf\color{blue}\sout{355}} & 359 & 361\\
 & {\bf\color{red}\sout{365}} & 367 & 371 & 373 & 377 & 379 & 383 & {\bf\color{blue}\sout{385}} & 389 & 391\\
 & {\bf\color{red}\sout{395}} & 397 & 401 & 403 & 407 & 409 & 413 & {\bf\color{blue}\sout{415}} & 419 & 421\\
 \hline
 & $\cdots$ & & & & & & & & & \\
\end{tabular}
\caption{$M_5$}
\label{fig:M5}
\end{figure}

We need to delete the multiples of $5$ in $M_3$, i.e.,
\[
\begin{split}
&{\bf\color{red}5\cdot 1},\quad 5\cdot 5,\\
&{\bf\color{red}5\cdot 7},\quad 5\cdot 11,\\
&{\bf\color{red}5\cdot 13},\quad 5\cdot 17,\\
&{\bf\color{red}5\cdot 19},\quad \ldots
\end{split}
\]
By Proposition \ref{prop:1}, $D_3$ has pattern $\{4,2\}$, hence the positions of the elements needed to delete occur periodically in $M_3$. The period is $5\times 6=30$, where $6$ is the length of the previous patter $\{4,2\}$.

From another view, we make $5$ copies of the pattern $\{4,2\}$. Say,
\[
4, 2, 4, 2, 4, 2, 4, 2, 4, 2.
\]
After deleting $5$ and $25$, we get the string:
\[
(4+2), 4, 2, 4, 2, 4, (2+4), 2.
\]
It is the pattern of $D_5$, we denote it by $\mathcal{P}_{5}=\{6,4,2,4,2,4,6,2\}$. Hence the period is $T_5=8=2\cdot 5-2=(5-1)\cdot T_3$.

Hence, we have seen that we give the definition of $\mathcal{P}_{p_n}$ and $T_{p_n}$ inductively. When we prove that $D_{p_n}$ contains a minimum subset $\mathcal{P}_{p_n}$ which occurs periodically, we will prove that $D_{p_{n+1}}$ also contains a minimum subset which is denoted by $\mathcal{P}_{p_{n+1}}$ that occurs periodically. Then, we can define the period $T_{p_{n+1}}$.

To see this fact more clearly, we now show that the pattern of $D_7$ is
\begin{equation}\label{pattern:D_7}
\begin{split}
&10=(6,4),2,4,2,4,6,2;\\
&6,4,2,4,6=(2,4),6,2;\\
&6,4,2,6=(4,2),4,6,8=(2,6);\\
&\ 4,2,4,2,4,8=(6,2);\\
&6,4,6=(2,4),2,4,6,2;\\
&6,6=(4,2),4,2,4,6,2;\\
&6,4,2,4,2,10=(4,6),2.
\end{split}
\end{equation}
The period is $T_7=48=(7-1)\cdot(5-1)\cdot(3-1)\cdot(2-1)$.

In fact, $M_7$ is obtained by deleting the multiples of $7$ in $M_5$. We can rearrange the numbers in Figure \ref{fig:M5} as in Figure \ref{fig:M5-arrange}.

\begin{figure}
\centering
\begin{tabular}{|c|c|c|c|c|c|c|c|c|c|c|}
\hline
1& 7 & 11 & 13 & 17 & 19 & 23 & 29 & 31 & {\it 37} & {\it 41} \\
\hline
 & {\it 43} & {\it 47} & {\it 49} & {\it 53} & {\it 59} & {\it 61}& {\bf 67} & {\bf 71} & {\bf 73} & {\bf 77} \\
 & {\bf 79} & {\bf 83} & {\bf 89} & {\bf 91} & {\it 97} & {\it 101}&{\it 103} &{\it 107}&{\it 109} &{\it 113} \\
 &{\it 119} &{\it 121} &{\bf 127} &{\bf 131} &{\bf 133} &{\bf 137} &{\bf 139} &{\bf 143}&{\bf 149} & {\bf 151} \\
 &{\it 157} &{\it 161} &{\it 163} &{\it 167} &{\it 169} &{\it 173} &{\it 179} &{\it 181}&{\bf 187} & {\bf 191} \\
 &{\bf 193} &{\bf 197} &{\bf 199} &{\bf 203} & {\bf 209} & {\bf 211}& & & & \\
\hline
\hline
 & 217 & 221 & 223 & 227 & 229 & 233 & 239 & 241 & 247 & 251 \\
 & 253 & 257 & 259 & 263 & 269 & 271 & 277 & 281 & 283 & 287 \\
 & 289 & 293 & 299 & 301 & 307 & 311 & 313 & 317 & 319 & 323 \\
 & 329 & 331 & 337 & 341 & 343 & 347 & 349 & 353 & 359 & 361 \\
 & 367 & 371 & 373 & 377 & 379 & 383 & 389 & 391 & 397 & 401 \\
 & 403 & 407 & 409 & 413 & 419 & 421 & & & & \\
\hline
\hline
 & 427 & 431 & 433 & 437 & 439 & 443 & 449 & 451 & 457 & 461 \\
 & 463 & 467 & 469 & 473 & 479 & 481 & 487 & 491 & 493 & 497 \\
 & 499 & 503 & 509 & 511 & 517 & 521 & 523 & 527 & 529 & 533 \\
 & 539 & 541 & 547 & 551 & 553 & 557 & 559 & 563 & 569 & 571 \\
 & 577 & 581 & 583 & 587 & 589 & 593 & 599 & 601 & 607 & 611 \\
 & 613 & 617 & 619 & 623 & 629 & 631 & & & & \\
\hline
\hline
 & $\cdots$ & & & & & & & & & \\
\end{tabular}
\caption{rearranging $M_5$}
\label{fig:M5-arrange}
\end{figure}

We need to delete the multiples of $7$ in $M_5$ (See Figure \ref{fig:M7}). They are
\[
\begin{split}
&{\bf\color{red}7\cdot 1},\quad 7\cdot 7,\quad 7\cdot 11,\quad 7\cdot 13,\quad 7\cdot 17,\quad 7\cdot 19,\quad 7\cdot 23,\quad 7\cdot 29,\\
&{\bf\color{red}7\cdot 31},\quad 7\cdot 37,\quad 7\cdot 41,\quad 7\cdot 43,\quad 7\cdot 47,\quad 7\cdot 49,\quad 7\cdot 53,\quad 7\cdot 59,\\
&{\bf\color{red}7\cdot 61},\cdots
\end{split}
\]

Since we have proved that $D_5$ has period pattern $\{6,4,2,4,2,4,6,2\}$, we can assert that the elements to be deleted above are periodic. Every first item in the period has gap $7\times 30$, where $30$ is exactly the length of previous pattern $\{6,4,2,4,2,4,6,2\}$. Therefore, the pattern is obtained in the following way. First copy the previous pattern $7$ times, then combine some pairs of consecutive numbers to get a new pattern.
\[
\begin{aligned}
\underline{6,4},2,4,2,4,6,2\\
6,4,2,4,\underline{2,4},6,2\\
6,4,2,\underline{4,2},4,6,\underline{2}\\
\underline{6},4,2,4,2,4,\underline{6,2}\\
6,4,\underline{2,4},2,4,6,2\\
6,\underline{4,2},4,2,4,6,2\\
6,4,2,4,2,\underline{4,6},2\\
\end{aligned}
\]
Thus, we get the pattern of $D_7$, see \eqref{pattern:D_7}.

\begin{figure}[htbp]
\centering
\begin{tabular}{|c|c|c|c|c|c|c|c|c|c|c|}
\hline
1& {\bf\color{red}\sout{7}} & 11 & 13 & 17 & 19 & 23 & 29 & 31 & {\it 37} & {\it 41} \\
\hline
 & {\it 43} & {\it 47} & {\bf\color{blue}\sout{49}} & {\it 53} & {\it 59} & {\it 61}& {\bf 67} & {\bf 71} & {\bf 73} & {\bf\color{blue}\sout{77}} \\
 & {\bf 79} & {\bf 83} & {\bf 89} & {\bf\color{blue}\sout{91}} & {\it 97} & {\it 101}&{\it 103} &{\it 107}&{\it 109} &{\it 113} \\
$M_7^{(0)}$ &{\bf\color{blue}\sout{119}} &{\it 121} &{\bf 127} &{\bf 131} &{\bf\color{blue}\sout{133}} &{\bf 137} &{\bf 139} &{\bf 143}&{\bf 149} & {\bf 151} \\
 &{\it 157} &{\bf\color{blue}\sout{161}} &{\it 163} &{\it 167} &{\it 169} &{\it 173} &{\it 179} &{\it 181}&{\bf 187} & {\bf 191} \\
 &{\bf 193} &{\bf 197} &{\bf 199} &{\bf\color{blue}\sout{203}} & {\bf 209} & {\bf 211}& & & & \\
\hline
\hline
 & {\bf\color{red}\sout{217}} & 221 & 223 & 227 & 229 & 233 & 239 & 241 & 247 & 251 \\
 & 253 & 257 & {\bf\color{blue}\sout{259}} & 263 & 269 & 271 & 277 & 281 & 283 & {\bf\color{blue}\sout{287}} \\
 & 289 & 293 & 299 & {\bf\color{blue}\sout{301}} & 307 & 311 & 313 & 317 & 319 & 323 \\
$M_7^{(1)}$ & {\bf\color{blue}\sout{329}} & 331 & 337 & 341 & {\bf\color{blue}\sout{343}} & 347 & 349 & 353 & 359 & 361 \\
 & 367 & {\bf\color{blue}\sout{371}} & 373 & 377 & 379 & 383 & 389 & 391 & 397 & 401 \\
 & 403 & 407 & 409 & {\bf\color{blue}\sout{413}} & 419 & 421 & & & & \\
\hline
\hline
 & {\bf\color{red}\sout{427}} & 431 & 433 & 437 & 439 & 443 & 449 & 451 & 457 & 461 \\
 & 463 & 467 & {\bf\color{blue}\sout{469}} & 473 & 479 & 481 & 487 & 491 & 493 & {\bf\color{blue}\sout{497}} \\
 & 499 & 503 & 509 & {\bf\color{blue}\sout{511}} & 517 & 521 & 523 & 527 & 529 & 533 \\
$M_7^{(2)}$ & {\bf\color{blue}\sout{539}} & 541 & 547 & 551 & {\bf\color{blue}\sout{553}} & 557 & 559 & 563 & 569 & 571 \\
 & 577 & {\bf\color{blue}\sout{581}} & 583 & 587 & 589 & 593 & 599 & 601 & 607 & 611 \\
 & 613 & 617 & 619 & {\bf\color{blue}\sout{623}} & 629 & 631 & & & & \\
\hline
\hline
 & $\cdots$ & & & & & & & & & \\
\end{tabular}
\caption{Obtain $M_{7}$ from $M_5$ by deleting the $7h$, $h\in M_5$}
\label{fig:M7}
\end{figure}

If we construct $M_{11}$, then we will consider the following eleven blocks:
\[
M_7^{(0)},M_7^{(1)},M_7^{(2)},\ldots,M_7^{(10)},
\]
where we use $M_{7}^{(0)}$ to denote the first block in $M_7$ corresponding to the first period of $D_{7}$.


Generally, when constructing the new pattern for $D_{p_{n+1}}$ from $D_{p_n}$, we first take $p_{n+1}$ copies of previous pattern for $D_{p_n}$. Then the period obeys the recursive formula:
\[
T_{p_{n+1}}=p_{n+1}\cdot T_{p_n}-T_{p_n}=(p_{n+1}-1)\cdot T_{p_n}.
\]
According to the induction hypothesis,
\[
T_{p_n}=\prod_{i=1}^{n}(p_i-1),
\]
we have
\[
T_{p_{n+1}}=\prod_{i=1}^{n+1}(p_i-1).
\]
\end{proof}

\begin{cor}
$L(\mathcal{P}_{p_n})=\prod_{i=1}^n p_i$.
\end{cor}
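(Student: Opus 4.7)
The plan is to argue by induction on $n$, piggy-backing on the recursive construction of $\mathcal{P}_{p_{n+1}}$ from $\mathcal{P}_{p_n}$ that was developed during the proof of Theorem~\ref{thm:periodicity}. The base case is immediate: $L(\mathcal{P}_3)=4+2=6=p_1p_2$. For the inductive step, I assume $L(\mathcal{P}_{p_n})=\prod_{i=1}^{n}p_i$ and recall that $\mathcal{P}_{p_{n+1}}$ is obtained by concatenating $p_{n+1}$ copies of $\mathcal{P}_{p_n}$ and then replacing certain consecutive pairs $(a,b)$ by their sum $a+b$ (each such replacement corresponds to deleting a multiple of $p_{n+1}$ from $M_{p_n}$, which merges two consecutive gaps of $D_{p_n}$ into a single gap of $D_{p_{n+1}}$). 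The key observation is that every such merge preserves the sum of entries, so
\[
L(\mathcal{P}_{p_{n+1}})=p_{n+1}\cdot L(\mathcal{P}_{p_n})=p_{n+1}\cdot\prod_{i=1}^{n}p_i=\prod_{i=1}^{n+1}p_i,
\]
which closes the induction.

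As a conceptual check (and an alternative self-contained route), the identity can also be read off by telescoping. Writing $M_{p_n}=\{x_1<x_2<\cdots\}$ with $x_1=1$, one has $L(\mathcal{P}_{p_n})=\sum_{k=1}^{T_{p_n}}(x_{k+1}-x_k)=x_{T_{p_n}+1}-1$. Since $M_{p_n}$ is precisely the set of positive integers coprime to $N:=\prod_{i=1}^{n}p_i$, inclusion--exclusion (or the Chinese Remainder Theorem) gives $|M_{p_n}\cap[1,N]|=\varphi(N)=\prod_{i=1}^n(p_i-1)=T_{p_n}$, hence $x_{T_{p_n}+1}=N+1$ and $L(\mathcal{P}_{p_n})=N$.

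I do not expect any serious obstacle: both routes reduce to a one-line computation once the right viewpoint is fixed. The only point requiring care in the inductive argument is verifying that a pair-merge $(a,b)\mapsto a+b$ preserves the total sum, which is immediate; and the minor bookkeeping that the merges prescribed in the proof of Theorem~\ref{thm:periodicity} never overlap, so that the sum of $\mathcal{P}_{p_{n+1}}$ is genuinely $p_{n+1}$ times the sum of $\mathcal{P}_{p_n}$.
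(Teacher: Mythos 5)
Your main argument is exactly the paper's proof: the corollary follows from the recursion $L(\mathcal{P}_{p_{n+1}})=p_{n+1}\,L(\mathcal{P}_{p_n})$, which holds because the pattern is built from $p_{n+1}$ concatenated copies of $\mathcal{P}_{p_n}$ and the pair-merges preserve the total sum. Your alternative telescoping/totient check is a nice independent confirmation, but the inductive route is precisely what the paper intends, so the proposal is correct and essentially identical in approach.
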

\begin{proof}
Using $L(\mathcal{P}_{p_n})=p_nL(\mathcal{P}_{p_{n-1}})$.
\end{proof}

\begin{lem}
If $p_n\geqslant 11$ (i.e., $n\geqslant 5$), $p_n^2$ is contained in the first block of $M_{p_{n-1}}$. (Here the block means the subset of $M_{p_{n-1}}$ corresponding to the $D_{p_{n-1}}$.)
\end{lem}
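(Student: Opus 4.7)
The plan is to reduce the lemma to the classical Bonse-type estimate $p_n^2 \leq p_1 p_2 \cdots p_{n-1}$ and then prove that by induction driven by Bertrand's postulate.

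First I would invoke the corollary $L(\mathcal{P}_{p_{n-1}}) = p_1 p_2 \cdots p_{n-1}$ to observe that the first block of $M_{p_{n-1}}$ consists precisely of the elements of $M_{p_{n-1}}$ lying in the interval $[1,\, 1 + p_1 p_2 \cdots p_{n-1}]$: starting from $u_1 = 1$ and accumulating one full period of differences from $\mathcal{P}_{p_{n-1}}$ brings us to $1 + L(\mathcal{P}_{p_{n-1}})$. The lemma therefore reduces to verifying two things: (i) $p_n^2 \in M_{p_{n-1}}$, and (ii) $p_n^2 \leq p_1 p_2 \cdots p_{n-1}$. Step (i) is immediate from the definition of $M_{p_{n-1}}$, since the only prime factor of $p_n^2$ is $p_n$, which is distinct from each of $p_1,\ldots,p_{n-1}$, so $p_n^2$ is not removed by any sieving step.

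The real content lies in (ii). I would handle this by induction on $n$ beginning at $n = 5$. The base case is a direct numerical check: $p_5^2 = 121 \leq 210 = 2\cdot 3\cdot 5\cdot 7$. For the inductive step, starting from $p_n^2 \leq p_1 \cdots p_{n-1}$ I multiply both sides by $p_n$ to obtain $p_n^3 \leq p_1 \cdots p_n$. Then I invoke Bertrand's postulate, $p_{n+1} < 2 p_n$, which gives $p_{n+1}^2 < 4 p_n^2$. Since $p_n \geq 11 > 4$, we have $4 p_n^2 \leq p_n^3$, and chaining yields $p_{n+1}^2 < p_1 \cdots p_n$, completing the induction.

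The main potential obstacle is simply recognizing (ii) as Bonse's inequality; once Bertrand's postulate is brought in, the induction passes without any subtlety. Beyond that, the argument is a bookkeeping combination of the length formula from the corollary to Theorem \ref{thm:periodicity} with the trivial coprimality of $p_n^2$ with each sieving prime.
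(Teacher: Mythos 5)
Your proof is correct and follows essentially the same route as the paper: reduce the lemma to a Bonse-type inequality, check the base case $n=5$ numerically ($121<210$), and run the induction using Bertrand's postulate $p_{n+1}<2p_n$. The only (harmless) difference is that you prove the slightly stronger inequality $p_n^2\leqslant p_1p_2\cdots p_{n-1}$, whereas the paper works with $p_n(p_n-1)<p_1p_2\cdots p_{n-1}$; your version is if anything the safer one given that the first block ends at $L(\mathcal{P}_{p_{n-1}})-1$, and you also make explicit the (trivial but worth stating) fact that $p_n^2\in M_{p_{n-1}}$.
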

\begin{proof}
First,
\[
\begin{split}
&p_n^2< p_n+L(\mathcal{P}_{p_{n-1}}) \\
\Leftrightarrow\  & p_n(p_n-1)< p_{n-1}\cdot p_{n-2}\cdots 5\cdot 3\cdot 2.\\
\end{split}
\]
It is easy to check that $p_k=11$ satisfies the inequality:
\[
p_k(p_k-1)< p_{k-1}\cdot p_{k-2}\cdots 5\cdot 3\cdot 2.
\]
By Bertrand's postulate $p_{k+1}<2p_k$, then it is easy to show that
\[
p_{k+1}(p_{k+1}-1)< p_{k}\cdot p_{k-1}\cdot p_{k-2}\cdots 5\cdot 3\cdot 2
\]
also holds if $k\geqslant 5$.

Then we have completed the proof.
\end{proof}

Hence, if $n\geqslant 5$, then the numbers between $p_n$ and $p_n^2$ which belong to the first block of $M_{p_{n-1}}$ are all primes. In fact, we observe that for any $n\geqslant 1$, for any element $q(>1)$ in the the first block of $M_{p_{n-1}}$, if $q$ satisfies $p_n<q<p_n^2$, then it is a prime.


\begin{thm}\label{thm:mirror-symmetry}
The gap sequence except the last element in the pattern is mirror symmetric.
\end{thm}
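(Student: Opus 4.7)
The plan is to identify $M_{p_n}$ with the set of positive integers coprime to $L_n := L(\mathcal{P}_{p_n}) = \prod_{i=1}^n p_i$, and then exploit the involution $x \mapsto L_n - x$, which preserves coprimality to $L_n$.

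First I would list the elements of $M_{p_n}$ in one period: since $x \in M_{p_n} \iff \gcd(x, L_n) = 1$, the intersection $M_{p_n} \cap [1, L_n]$ consists of exactly $\prod_{i=1}^n (p_i - 1) = T_{p_n}$ integers, which I denote $1 = u_1 < u_2 < \cdots < u_{T_{p_n}} = L_n - 1$ (the outer values being forced since $1$ and $L_n - 1$ are both coprime to $L_n$). Correspondingly, $\mathcal{P}_{p_n}$ splits into the $T_{p_n} - 1$ interior gaps $d_i = u_{i+1} - u_i$, together with one wrap-around gap
\[
d_{T_{p_n}} = (L_n + u_1) - u_{T_{p_n}} = L_n + 1 - (L_n - 1) = 2,
\]
and this wrap-around is precisely the ``last element'' that the theorem excludes.

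Next I would use the fact that $\gcd(x, L_n) = \gcd(L_n - x, L_n)$, so $x \mapsto L_n - x$ is an order-reversing involution on $\{u_1, \ldots, u_{T_{p_n}}\}$. This yields
\[
u_i + u_{T_{p_n}+1-i} = L_n \qquad (1 \le i \le T_{p_n}).
\]
Substituting into the definition of $d_{T_{p_n} - i}$ gives
\[
d_{T_{p_n} - i} = u_{T_{p_n} - i + 1} - u_{T_{p_n} - i} = (L_n - u_i) - (L_n - u_{i+1}) = d_i
\]
for $1 \le i \le T_{p_n} - 1$, which is exactly the claimed palindrome property.

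Once the right identification is adopted, the proof is essentially two lines, so there is no serious obstacle of a calculational or combinatorial nature. The only care-point is bookkeeping at the period boundary: one must confirm that in the inductive construction in Theorem \ref{thm:periodicity} each period truly starts at $u_1 = 1$ and that the entry discarded by the theorem is the wrap-around gap $2$. Both are immediate from how $M_{p_{n+1}}$ is assembled from $p_{n+1}$ copies of the period of $M_{p_n}$: the asymmetry introduced by the ``seed'' $1$ is pushed to the single final gap of the pattern, while the involution $x \mapsto L_n - x$ makes everything between $u_1$ and $u_{T_{p_n}}$ symmetric.
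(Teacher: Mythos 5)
Your proof is correct, and it is a genuinely different route from the one the paper takes. The paper argues by induction on the construction: assuming the first $p_{n+2}$ blocks of $M_{p_{n+1}}$ are ``mirror complemental'' about their centers, it deduces that the set of deleted multiples $\{p_{n+2}k \mid k\in M_{p_{n+1}}\}$ is again mirror complemental, hence so is the resulting block of $M_{p_{n+2}}$; the symmetry of the gap sequence then follows. You instead give a direct, non-inductive argument: $M_{p_n}$ is exactly the set of positive integers coprime to $L_n=\prod_{i=1}^n p_i$, the involution $x\mapsto L_n-x$ preserves coprimality and reverses order on $M_{p_n}\cap[1,L_n]$, so $u_i+u_{T_{p_n}+1-i}=L_n$ and hence $d_{T_{p_n}-i}=d_i$ for $1\le i\le T_{p_n}-1$, while the excluded last gap is the wrap-around $u_{T_{p_n}}=L_n-1 \to L_n+1$ of size $2$ (recovering Corollary \ref{cor:1} for free). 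Your version is shorter and more rigorous than the paper's, since it replaces the informal inductive bookkeeping with a one-line symmetry of residues; interestingly, the paper itself invokes essentially your involution (``symmetry of gaps with respect to change of signs of residues'') later, in the proof of Theorem \ref{thm:about-skips}(ii), but not in its proof of Theorem \ref{thm:mirror-symmetry}. The only thing your write-up quietly relies on is Theorem \ref{thm:periodicity}'s identification of the pattern with the gap sequence of one block $[1,L_n]$ starting at $u_1=1$, which you correctly flag and which is legitimate to import.
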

\begin{proof}
For example, when constructing $D_{11}$ from $D_7$, we delete the numbers $\{11k\mid k\in M_{7}\}$. We list the divisors $k$ of the first period. They are
\[
\begin{matrix}
 1 &     11&    13&    17&        19&    23&    29&    31&    37&    41\\
 43&     47&    53&    59&        61&    67&    71&    73&    79&    83\\
 89&     97&  101&  {\bf 103} &  *  {\bf 107} &  109&  113&  121&  127&  131\\
 137&   139&  143&  149&      151&  157&  163&  167&  169&  173\\
 179&   181&  187&  191&      193&  197&  199&  209&     &     \\
\end{matrix}
\]
These $48$ numbers are mirror complemental. The sum of every two symmetric numbers about $*$ are equal to $210$. More precisely, every block in $M_7$ are mirror complemental about the center of the block. It infers that the mirror symmetry of the gap sequence except the last element in the pattern $\mathcal{P}_{11}$. Therefore, when constructing $D_{p_{n+2}}$ from $D_{p_{n+1}}$, we conclude that the numbers which to be deleted $\{p_{n+2}k\mid k\in M_{p_{n+1}}\}$ are mirror complemental because the numbers $k\in M_{p_{n+1}}$ corresponding the first period are mirror complemental by the inductive hypothesis. And by inductive hypothesis, the first $p_{n+2}$ blocks in $M_{p_{n+1}}$ are mirror complemental. Therefore, the gap sequence in the pattern $\mathcal{P}_{p_{n+2}}$ except the last number is mirror symmetric.
\end{proof}

By the argument above, we easily have
\begin{cor}\label{cor:1}
The last element in every pattern $\mathcal{P}_{p_n}$ is $2$.
\end{cor}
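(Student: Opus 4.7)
The plan is to leverage the immediately preceding corollary. Let $N := \prod_{i=1}^n p_i$, so that $L(\mathcal{P}_{p_n}) = N$. Since $M_{p_n}$ begins at $1$ and the gap sequence repeats with total length $N$, one full occurrence of $\mathcal{P}_{p_n}$ records the successive differences of the elements of $M_{p_n}$ starting at $1$ and ending at $1+N$. The final entry of the pattern is therefore exactly the gap from the largest element of $M_{p_n}$ strictly below $1+N$ to $1+N$ itself.

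My next step is to pin down those two endpoints by a direct modular check. For each $i \le n$ we have $N+1 \equiv 1 \pmod{p_i}$, so $N+1 \in M_{p_n}$ and the period indeed closes there. Similarly $N-1 \equiv -1 \pmod{p_i}$, so $N-1$ is coprime to every $p_i$ with $i \le n$ and hence $N-1 \in M_{p_n}$. The only integer strictly between $N-1$ and $N+1$ is $N$ itself, and since $p_i \mid N$ for every such $i$, the value $N$ is removed from $M_{p_n}$. Consequently $N-1$ is the immediate predecessor of $N+1$ in $M_{p_n}$, so the last gap equals $(N+1)-(N-1)=2$.

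I do not anticipate any real obstacle: the entire argument reduces to a coprimality observation once the length of the pattern and its starting point have been identified. The only minor subtlety is confirming that one complete period of the gap pattern ends precisely at $1+N$, which is immediate from the fact that the pattern begins at $1$ together with the preceding corollary stating $L(\mathcal{P}_{p_n})=N$.
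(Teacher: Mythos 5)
Your argument is correct, but it takes a different route from the paper. The paper derives the identity of the last element of the first block from the mirror symmetry established in Theorem \ref{thm:mirror-symmetry}: since each block is ``mirror complemental,'' the first element $1$ and the last element $x$ of the first block pair up to give $1+x=L(\mathcal{P}_{p_n})$, whence $x=N-1$, and the first element of the next block is $y=N+1$, so the final gap is $y-x=2$. You instead verify directly by congruences that $N-1$ and $N+1$ both survive the sieve (each is $\equiv\pm1\bmod p_i$ for all $i\leqslant n$) while $N$ itself is deleted, so $N-1$ is the immediate predecessor of $N+1$ in $M_{p_n}$. Your version is more self-contained and elementary --- it does not depend on the symmetry theorem at all, only on $L(\mathcal{P}_{p_n})=\prod_{i=1}^n p_i$ and on the pattern starting at $1$ --- whereas the paper's proof reuses machinery it has already built. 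The one point you flag yourself, that a full period of the gap sequence runs from $1$ to $1+N$, is indeed the only thing that needs the preceding corollary, and you handle it adequately.
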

\begin{proof}
We still illustrate this fact first by the above example. Where the length of the period is $210$. Thus the next number of $209$ in the second block of $M_7$ is $211$. Thus the last element in $\mathcal{P}_7$ is $2=211-209$.

Generally, if we use $x$ to denote the last element in the first block of $M_{p_n}$, then $1+x=L(\mathcal{P}_{p_n})$ by the mirror complementary. On the other hand, the first element denoted by $y$ in the second block of $M_{p_n}$ satisfies that $y-1=L(\mathcal{P}_{p_n})$. Thus, $y-x=2$ which is just the last element of the patter $\mathcal{P}_{p_n}$.
\end{proof}

\begin{thm}\label{thm:about-skips}
(i) The multiplicities of skips $2$ and $4$ are always same and odd.\\
(ii) The central gap is always $4$, and the multiplicities of all gaps except $2$ and $4$ are even.
\end{thm}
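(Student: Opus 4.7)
The plan is to split the theorem into its two parts and attack each with a different device: (ii) will fall out of the mirror symmetry of Theorem~\ref{thm:mirror-symmetry} once I pin down the central gap, while the equality in (i) requires a direct Chinese Remainder count.

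\textbf{Palindrome setup.} Write $T=T_{p_n}=\prod_{i=1}^{n}(p_i-1)$, $L=L(\mathcal{P}_{p_n})=\prod_{i=1}^n p_i$, and let $d_1,\ldots,d_T$ denote the consecutive gaps of $\mathcal{P}_{p_n}$. By Theorem~\ref{thm:mirror-symmetry} we have $d_i=d_{T-i}$ for $1\le i\le T-1$, and by Corollary~\ref{cor:1}, $d_T=2$. For $n\ge 2$, $T$ is even (the factor $p_2-1=2$ appears), so the palindromic portion $d_1,\ldots,d_{T-1}$ has odd length $T-1$: its center $d_{T/2}$ is self-paired while every other position is paired with a distinct position of equal value. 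Hence in this portion $d_{T/2}$ has odd multiplicity and every other value has even multiplicity.

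\textbf{The central gap equals $4$.} For $n\ge 2$, I would argue directly on residues: $L/2=3\cdot 5\cdots p_n$ is divisible by $3$, so $L/2\notin M_{p_n}$; $L/2\pm 1$ is even, so also $\notin M_{p_n}$; whereas $L/2\pm 2$ is odd and satisfies $L/2\pm 2\equiv\pm 2\not\equiv 0\pmod{p_i}$ for every $p_i\ge 3$, so $L/2\pm 2\in M_{p_n}$. The mirror symmetry of the first block pairs $x\leftrightarrow L-x$, so the central consecutive pair must be $(x_{T/2},x_{T/2+1})=(L/2-2,L/2+2)$, forcing $d_{T/2}=4$. Appending $d_T=2$ to the palindromic portion then converts the parities into $\mathrm{mult}(4)$ odd, $\mathrm{mult}(2)=\mathrm{even}+1$ odd, and $\mathrm{mult}(g)$ even for every $g\notin\{2,4\}$, which establishes (ii).

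\textbf{Equality of the two odd multiplicities.} Because $x+1$ and $x+3$ are always even and thus outside $M_{p_n}$, a gap of $2$ at $x$ amounts to $x,x+2\in M_{p_n}$, and a gap of $4$ at $x$ amounts to $x,x+4\in M_{p_n}$ together with $x+2\notin M_{p_n}$. Setting $N(k):=\#\{x\bmod L:x,x+k\in M_{p_n}\}$ and $N(2,4):=\#\{x\bmod L:x,x+2,x+4\in M_{p_n}\}$, we have $\mathrm{mult}(2)=N(2)$ and $\mathrm{mult}(4)=N(4)-N(2,4)$. By CRT, $N(k)=\prod_{p\mid L}a_p(k)$ with $a_p(k)=p-1$ if $p\mid k$ and $a_p(k)=p-2$ otherwise; since only $p=2$ divides $2$ and $4$, one obtains $N(2)=N(4)=\prod_{i=2}^n(p_i-2)$. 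Moreover $N(2,4)=0$ for $n\ge 2$, because $\{x,x+2,x+4\}$ covers every residue modulo $3$ and hence always contains a multiple of $3$. Therefore $\mathrm{mult}(2)=\mathrm{mult}(4)=\prod_{i=2}^n(p_i-2)$, a product of odd integers and hence odd, proving (i).

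\textbf{Main obstacle.} The palindrome argument alone forces $\mathrm{mult}(2)$ and $\mathrm{mult}(4)$ to be odd, but does not force them to be equal. The real arithmetic content is the identity $N(2,4)=0$---the observation that three consecutive odd integers always contain a multiple of $3$---and once this is in hand the rest is bookkeeping.
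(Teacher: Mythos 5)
Your proof is correct and follows essentially the same route as the paper: a CRT count of residue classes with two forbidden values at each odd prime gives $\mathrm{mult}(2)=\mathrm{mult}(4)=\prod_{i=2}^{n}(p_i-2)$, and the residue analysis of $L/2,\,L/2\pm1,\,L/2\pm2$ combined with the mirror symmetry gives (ii). Your explicit verification that $N(2,4)=0$ (three consecutive odd numbers always hit a multiple of $3$) is a slightly more careful rendering of the point the paper disposes of by remarking that consecutive skips $2,2$ cannot occur.
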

\begin{proof}
(i) In fact, these multiplicities for $k$-prime basis, $k>1$, are both precisely equal to
\begin{equation}\label{eqn:tpk}
(3-2)\cdot(5-2)\cdots(p_k-2),
\end{equation}
which is always odd as well. For example, there are 15 skips $2$ in the pattern $\mathcal{P}_7$ (see \eqref{pattern:D_7}).

For element $x$ in $M_{p_k}$, we consider the residues of $2,3,5,7,\ldots,p_k$ respectively, i.e.,
\[
x\equiv(x_1,x_2,x_3,\ldots,x_k)\mod (p_1,p_2,p_3,\ldots,p_k),
\]
and for simplicity we write it as a vector $x=(x_1,x_2,x_3,\ldots,x_k)$.

The multiplicity of skip $2$ is easily obtained by noting that some integer $x = (x_1, x_2,\ldots, x_k)$ is the first relatively prime (r-prime) before skip $2$, iff $x$ has residues of the form $(1, x_2, x_3,\ldots, x_k)$ where no $x_i$ is $0$ or $-2$. (If $x_i=0$, then $x$ is a multiple of $p_i$ which will not in $M_{p_k}$ for $i\leqslant k$. Similarly, if $x_i=-2$, then $x_i+2$ is a multiple of $p_i$.) Since number of such $k$-tuples with $2$ forbidden values on the upper $k-1$ residues is $(3-2)\cdot(5-2)\cdots(p_k-2)$, that is the multiplicity of skip $2$.

Similarly, for skip $4$, the integer $x$ is the first r-prime before skip $4$ iff it has residues of the form $(1, x_2, x_3,\ldots,x_k)$ where no $x_i$ is $0$ or $-4$, yielding the above multiplicity with the two forbidden values on $x_2, x_3,\ldots, x_k$.

Note that the above simple method doesn't work for skips larger than $4$ since these can be obtained both via replication (by increasing k) and via merging of adjacent skips which follows replication. In contrast, for $k>1$, skips $2$ and $4$ can be obtained only via replication (with growing $k$), but not via skip merging during filtering, since skip subsequences $2, 2$ cannot occur for $k>1$ (the replicated residue pattern for $k>1$ is $2,4,2,4,\ldots$).

(ii) Regarding the formula above \eqref{eqn:tpk} for multiplicities of gaps $2$ and $4$, the second statement of (ii) (the multiplicities of all gaps except $2$ and $4$ are even) is true. This follows easily from the mirror symmetry (which is easiest seen as symmetry gaps with respect to change of signs of residues) and the fact that the central gap (which could straddle the symmetry center and mirror into itself) is always 4. (End gap is always 2, and these are the only 2 exceptions.) Regarding the latter clause, for a k-prime basis $p_1=2, p_2=3, \ldots, p_k$, the central integer (symmetry axis) is at $C=3\cdot 5\cdots p_k$ and its residue pattern is $C=(1, 0, 0,\ldots, 0)$. The integers around $C$ are then in residue form as follows:
\[
\begin{array}{rrrrrcrl}
C-2: & -1 & -2 & -2 & -2 & \ldots & -2  & \text{r-prime}\\
C-1: &  0 & -1 & -1 & -1 & \ldots & -1  & \text{r-composite}\\
  C: &  1 & 0  & 0  & 0  & \ldots & 0   & \text{r-composite}\\
C+1: &  0 & 1  & 1  & 1  & \ldots & 1   & \text{r-composite}\\
C+2: &  1 & 2  & 2  & 2  & \ldots & 2   & \text{r-prime}\\
\end{array}
\]

which always has $2$ r-primes at $C\pm 2$, and $3$ r-composites at $C, C\pm 1$, yielding thus gap $4$ at the symmetry axis. The mirror symmetry around $C$ then implies the second statement.
\end{proof}

\begin{remark}\label{rem:largest-skip}
The largest skip comes in pairs only up to first $8$ basis primes (i.e. up to $\mathcal{P}_{19}$), but already for the $9$ prime basis ($\mathcal{P}_{23}$) the largest skip is $40$ which occurs $12$ times. Beyond the $9$ prime basis, the longest pattern occurs in pairs only for $3$ more cases: $10$, $11$ and $13$ basis primes i.e. $\mathcal{P}_{29}$, $\mathcal{P}_{31}$ and $\mathcal{P}_{41}$. All other cases have larger multiplicities of the longest skip.

Note also that for the basis with first $k$ primes $p_1=2, p_2=3, \ldots, p_k$, the trivial low bound for the max skip is $2p_{k-1}$, e.g. for the set $\mathcal{P}_{13} (k=6)$, the low bound is $2\cdot 11=22$ which happens to be the actual longest skip for $\mathcal{P}_{13}$.

The general residue pattern for solution meeting this low bound with $k$ prime basis is $( 0, 0,  0, ... 0, \mp 1, \pm 1 )$, where first $k-2$ residues are $0$, and last two are $+1$ and $-1$ or $-1$ and $+1$ (which corresponds to the two solutions). The two longest patterns occur only when this low bound $2p_{k-1}$ is met, otherwise there are more than $2$ maxima (again there is even number of solutions due to mirror symmetry).

For the $9$ prime basis ($\mathcal{P}_{23}$), the above low bound is $2\cdot 19=38$, while the max skip is $40$, hence the low bound defect is $40-38=2$. The max skip is $100$ for $\mathcal{P}_{47}$, while the low bound is $2\cdot 43=86$, yielding defect $14$. The max skip is $200$ for $\mathcal{P}_{79}$ while the low bound is $2\cdot 73=146$, yielding defect $54$. The max skip is $300$ for $\mathcal{P}_{107}$ ($28$ primes basis), while low bound is $2\cdot 103=206$, defect $94$. The max skip is $414$ (i.e. the first $\geqslant 400$) for $\mathcal{P}_{139}$ ($34$ primes basis), while low bound is $2\cdot 137=274$, defect $140$. The max skip is $510$ (the first $\geqslant 500$) for $\mathcal{P}_{167}$  ($39$ primes basis), defect $184$, etc.
\end{remark}

\begin{conj}
$6$ is the number that occurs the most times in any pattern $\mathcal{P}_{p_n}$ when $n\geqslant 11$.
\end{conj}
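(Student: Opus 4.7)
The plan is to follow the residue-vector framework introduced in Theorem \ref{thm:about-skips} and derive a closed formula for the multiplicity $N_g$ of each gap $g$ in $\mathcal{P}_{p_n}$, then compare the formulas. For $g = 2k$, an r-prime $x$ starting a gap of exactly $g$ corresponds to a residue vector $(1, x_2, \ldots, x_n)$ satisfying (i) $x_i \neq 0$ for all $i \geq 2$, (ii) $x_i \neq -g \pmod{p_i}$, and (iii) for each $j \in \{1, \ldots, k-1\}$, there is some $i$ with $x_i \equiv -2j \pmod{p_i}$. Inclusion-exclusion on (iii) yields
\begin{equation*}
N_g = \sum_{W \subseteq \{1,\ldots,k-1\}} (-1)^{|W|} \prod_{i=2}^n \Bigl|\{y \in \mathbb{Z}/p_i : y \notin \{0, -g\} \cup \{-2j : j \in W\}\}\Bigr|.
\end{equation*}
Direct evaluation gives $N_2 = N_4 = \prod_{i=2}^n(p_i - 2)$, $N_6 = 6\prod_{i=4}^n(p_i - 2) - 4\prod_{i=4}^n(p_i - 3)$, $N_8 = 3\prod_{i=4}^n(p_i - 2) - 4\prod_{i=4}^n(p_i - 3) + \prod_{i=4}^n(p_i - 4)$, and analogous expressions for the remaining $N_g$.

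From these formulas the first round of comparisons is easy. One checks $N_6/N_2 = 2 - \tfrac{4}{3}\prod_{i=4}^n\bigl(1 - \tfrac{1}{p_i - 2}\bigr)$, and the product drops below $3/4$ by $n = 5$, so $N_6 > N_2 = N_4$ for all $n \geq 5$. Similarly $N_6 - N_8 = 3\prod_{i=4}^n(p_i - 2) - \prod_{i=4}^n(p_i - 4) > 0$ for all $n \geq 4$. More generally, for gaps $g = 2^m$ the bare pair count at distance $g$ already equals $N_2 < N_6$; and for $g$ whose $g/2$ has a single small odd prime factor, the factor $P_{g/2} := \prod_{p \mid g/2,\, p \leq p_n}(p-1)/(p-2)$ still satisfies $P_{g/2} < 2$, whence $N_g \leq P_{g/2} N_2 < N_6$.

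The serious obstacle is gaps $g$ for which $g/2$ is highly composite in small primes, most notably $g = 30, 60, 90, 210$. For $g = 30$ one has $P_{15} = \tfrac{8}{3}$, so the pair count at distance $30$ by itself already exceeds $N_6$; the conjectured inequality $N_{30} < N_6$ can hold only because the $14$ intermediate positions are required to be r-composite, a restriction whose strength decreases as $n$ grows. A successful argument must therefore expand the full inclusion-exclusion for $N_{30}$ and, using the exact small-prime behavior at $p_i \in \{3, 5, 7, 11, 13\}$, verify termwise that the correction factors $\prod_i(1 - r/(p_i - 2))$ keep $N_{30}$ beneath $N_6$ for every $n \geq 11$—not merely in the limit.

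Concretely, the proposed program is: first, carry out the explicit inclusion-exclusion for $N_g$ with $g \in \{30, 60, 90, 210\}$ and bound the remaining gaps uniformly by the simpler $P_{g/2} N_2$ estimate; second, verify $N_6 > N_g$ by direct computation from the closed formulas for $\mathcal{P}_{p_{11}}, \ldots, \mathcal{P}_{p_N}$ up to some cutoff $N$; and third, attempt an induction via the replicate-and-merge construction of Theorem \ref{thm:periodicity}, tracking how each $N_g$ is updated when passing from $\mathcal{P}_{p_n}$ to $\mathcal{P}_{p_{n+1}}$. The hard part will be the third step: under replication $N_6$ multiplies by $p_{n+1}$, and the merges near the removed multiples of $p_{n+1}$ simultaneously create new large gaps and destroy some sixes; controlling whether the net effect preserves the lead of $N_6$ over $N_{30}$ uniformly in $n$ is where the main technical difficulty lies.
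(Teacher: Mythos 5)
First, a point of order: the statement you are addressing is stated in the paper only as a \emph{conjecture}; the authors give no proof, so there is no argument of theirs to compare yours against. Your framework is the natural one and is consistent with the paper's proof of Theorem \ref{thm:about-skips}: the inclusion--exclusion formula for the multiplicity $N_g$ over residue vectors is correct, your closed forms for $N_2$, $N_4$, $N_6$, $N_8$ check out (I verified the collisions modulo $3$ and $5$ that produce the coefficients $6$, $-4$, $3$, $-4$, $+1$), and the deductions $N_6>N_2=N_4$ for $n\geqslant 5$ and $N_6>N_8$ are valid. But what you have written is a research program whose decisive steps are explicitly left open, not a proof, and you say as much in your final paragraph.

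The deeper problem is that the program cannot be completed, because your own formulas show the conjectured statement fails for all sufficiently large $n$. You have $N_6/N_2=2-\tfrac{4}{3}\prod_{i=4}^{n}\bigl(1-\tfrac{1}{p_i-2}\bigr)<2$ for every $n$. For $g=30$ the bare pair count is $\tfrac{8}{3}N_2$, and each inclusion--exclusion correction forcing an intermediate position $x+2j$ to be r-composite is smaller than the pair count by a factor of order $\prod_{i}\bigl(1-\tfrac{1}{p_i-2}\bigr)$, which tends to $0$ as $n\to\infty$ since $\sum 1/(p_i-2)$ diverges; a Bonferroni bound over the $14$ intermediate positions then gives $N_{30}/N_2\to \tfrac{8}{3}>2>N_6/N_2$. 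Hence $N_{30}>N_6$ for all sufficiently large $n$ (the jumping-champions phenomenon), so no induction in your third step can preserve the lead of $N_6$ uniformly in $n$; you note that the intermediate-compositeness restriction ``decreases as $n$ grows'' but do not follow this observation to its conclusion. At best the program can verify the claim on a finite initial range of $n$ and must refute it beyond some (very large) threshold. A secondary gap: your uniform bound $N_g\leqslant P_{g/2}N_2<N_6$ for the ``easy'' gaps fails for every $g$ with $6\mid g$ (e.g.\ $g=12,18,24,36$), since then $3\mid g/2$ and $P_{g/2}\geqslant 2>N_6/N_2$; all of those gaps, not just $30,60,90,210$, require the intermediate-position corrections rather than the bare pair count.
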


\begin{remark}
Let $t_{p_{n+1}}$ denote the multiplicity of $2$ in the pattern for $D_{p_{n+1}}$, then we get a recurrence inequality,
\[
t_{p_{n+1}}\geqslant t_{p_{n}}\cdot p_{n+1}-T_{p_n}.
\]
But it is a rough estimate. In fact, by equation \eqref{eqn:tpk}, we have
\[
t_{p_{n+1}}>T_{p_n}.
\]
\end{remark}

\section{Application to get consecutive primes}

Based on the above discussion, to obtain the pattern for $D_{p_{n}}$, we first take $p_{n}$ copies of the previous pattern for $D_{p_{n-1}}$. Then, delete the corresponding elements $p_n h$ in $M_{p_{n-1}}$, where $h\in M_{p_{n-1}}$. These elements to be deleted are distributed in the following way. The number of them is $T_{p_{n-1}}$.

\begin{center}
\begin{tikzpicture}[scale=1]
\draw[gray,dashed] (0,0) -- (9,0);
\fill [red] ($(0,0)$) circle (2pt);
\node  at (-0.5,-0.7)[anchor=south] {$1$};
\node  at (0,-0.7)[anchor=south] {$p_n$};

\node  at (0.7,0.5)[anchor=south] {$p_nd_{p_{n-1}}^1$};

\fill [red] ($(1.5,0)$) circle (2pt);
\node  at (1.5,-0.7)[anchor=south] {$p_n^2$};

\fill [red] ($(2.5,0)$) circle (2pt);
\node  at (2.7,-0.7)[anchor=south] {$p_n p_{n+1}$};
\node  at (2.1,0.5)[anchor=south] {$p_nd_{p_{n-1}}^2$};
\node  at (3.4,0.5)[anchor=south] {$p_nd_{p_{n-1}}^3$};
\node  at (4.75,0.5)[anchor=south] {$p_nd_{p_{n-1}}^4$};

\node  at (8,0.5)[anchor=south] {$p_nd_{p_{n-1}}^{T_{p_{n-1}}}$};

\fill [red] ($(4.0,0)$) circle (2pt);
\fill [red] ($(5.5,0)$) circle (2pt);
\fill [red] ($(7.0,0)$) circle (2pt);
\fill [green] ($(9.0,0)$) circle (2pt);

\draw[gray] (0,0) -- (0,0.5);
\draw[gray] (1.5,0) -- (1.5,0.5);
\draw[gray] (2.5,0) -- (2.5,0.5);
\draw[gray] (4,0) -- (4,0.5);
\draw[gray] (5.5,0) -- (5.5,0.5);
\draw[gray] (7,0) -- (7,0.5);
\draw[gray] (9,0) -- (9,0.5);

\draw[gray][<->] (0,0.3) -- (1.5,0.3);
\draw[gray][<->] (1.5,0.3) -- (2.5,0.3);
\draw[gray][<->] (2.5,0.3) -- (4.0,0.3);
\draw[gray][<->] (4.0,0.3) -- (5.5,0.3);
\draw[gray][<->] (7,0.3) -- (9,0.3);

\fill [blue] ($(1.2,0)$) circle (2pt);
\draw[blue][->] (1.2,-0.6) -- (1.2,-0.1);
\node  at (1.2,-1.0)[anchor=south] {$q_n$};
\end{tikzpicture}
\end{center}

Here $\mathcal{P}_{p_{n-1}}=\{d_{p_{n-1}}^1, d_{p_{n-1}}^2,\ldots, d_{p_{n-1}}^{T_{p_{n-1}}}\}$ is the pattern for $D_{p_{n-1}}$. The red points are just the elements to be deleted. The blue point $q_n$ denotes the biggest prime which is less than $p_n^2$. The last green point is also deleted. But it is regarded as in the next period.

\begin{prop}\label{prop:2}
The numbers in the set
\[
\{q\in M_{p_{n-1}}\mid p_n<q<p_n p_{n+1},\quad q\neq p_n^2\}
\]
are consecutive primes.
\end{prop}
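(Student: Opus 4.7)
The plan is to show that the displayed set is exactly the set of primes lying in the open interval $(p_n,\, p_n p_{n+1})$, so that it forms an initial segment of the prime sequence beginning at $p_{n+1}$ and hence consists of consecutive primes.

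First I would unpack the definition of $M_{p_{n-1}}$: an integer $q>1$ belongs to $M_{p_{n-1}}$ if and only if every prime factor of $q$ is at least $p_n$. In particular, any composite $q\in M_{p_{n-1}}$ factors as $q=ab$ with $p_n\leqslant a\leqslant b$, and therefore $q\geqslant p_n^2$. Conversely, any prime $p>p_{n-1}$ automatically belongs to $M_{p_{n-1}}$, since it has no prime divisor in $\{p_1,\ldots,p_{n-1}\}$.

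The heart of the argument is to classify the composite elements of $M_{p_{n-1}}$ that fall in the range $(p_n,\, p_n p_{n+1})$. Writing such a composite as $q=ab$ with $p_n\leqslant a\leqslant b$ and each factor having all its prime divisors $\geqslant p_n$, I would split into two cases. If $a=p_n$, then $b<p_{n+1}$, but $b$ also has no prime divisor smaller than $p_n$, forcing $b=p_n$ and $q=p_n^2$. If instead $a>p_n$, then $a\geqslant p_{n+1}$ and $b\geqslant p_{n+1}$, giving $q\geqslant p_{n+1}^2>p_n p_{n+1}$, contradicting $q<p_n p_{n+1}$. Hence the unique composite element of $M_{p_{n-1}}$ in this range is $p_n^2$ itself.

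Putting the pieces together: after removing $p_n^2$, every element of the set is prime, and every prime $p$ with $p_n<p<p_n p_{n+1}$ lies in the set (it is in $M_{p_{n-1}}$ by the observation above, and it is not equal to $p_n^2$). Thus the set equals $\{p\text{ prime} : p_n<p<p_n p_{n+1}\}=\{p_{n+1},p_{n+2},\ldots,p_m\}$, where $p_m$ is the largest prime strictly below $p_n p_{n+1}$, and these are consecutive primes by definition. There is no real obstacle here; the only point requiring care is the case split $a=p_n$ versus $a\geqslant p_{n+1}$ in the analysis of composites, which uses the fact that there are no primes strictly between $p_n$ and $p_{n+1}$.
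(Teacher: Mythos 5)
Your argument is correct and is essentially the rigorous version of what the paper only sketches via its diagram: the key point in both is that the smallest composite all of whose prime factors are at least $p_n$ is $p_n^2$, and the next such multiples of $p_n$ in $M_{p_{n-1}}$ are already $\geqslant p_n p_{n+1}$, while a composite with least prime factor $\geqslant p_{n+1}$ is $\geqslant p_{n+1}^2 > p_n p_{n+1}$. The paper states the proposition without a formal proof, so your case split on the least prime factor $a=p_n$ versus $a\geqslant p_{n+1}$ supplies exactly the missing justification; no gaps.
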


If there exist at least one gap $2$ in the pattern $\mathcal{P}_{p_{n-1}}$ located in the subset corresponding to the interval $[p_{n+1},q_n]$. Then, this gap $2$ is kept in the next pattern $\mathcal{P}_{p_n}$, and also in the subset corresponding to the interval $[p_{n+1},q_n]$.

%
%
%
%
%

\begin{thm}\label{thm:chi-interval}
Let $c$ be a positive integer. Define a characteristic function for each interval $I_k=[(k-1)c,kc)$, $k=1,2,\ldots$ as follows:
\[
\chi_{_{I_k}}=\begin{cases}
1, & \text{if}\ I_k\ \text{contains one or more primes},\\
0, & \text{otherwise}.\\
\end{cases}
\]
If $m$ is large enough, and if $c\geqslant 3$, then
\[
\frac{1}{c}\pi(mc)\leqslant\sum_{k=1}^{m}\chi_{_{I_k}}<\pi(mc),
\]
that is,
\[
\sum_{k=1}^{m}\chi_{_{I_k}}\asymp\frac{m}{\ln m}.
\]
\end{thm}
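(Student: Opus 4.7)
The plan is to introduce $N_k$ for the number of primes in $I_k$ and prove both inequalities by comparing $\chi_{I_k}$ with $N_k$. First I would note that since $c \geq 3$ and $m \geq 2$, $mc$ is composite, so the primes in $[0, mc)$ coincide with the primes counted by $\pi(mc)$; hence
\[
\sum_{k=1}^{m} N_k = \pi(mc).
\]
This single identity drives both bounds.

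For the lower bound, $I_k$ consists of exactly $c$ consecutive integers, so $N_k \leq c$, which immediately gives $\chi_{I_k} \geq N_k/c$ (trivial in either case $N_k=0$ or $N_k \geq 1$). Summing over $k$ yields $\sum_k \chi_{I_k} \geq \pi(mc)/c$. For the strict upper bound, $\chi_{I_k} \leq N_k$ holds with equality iff $N_k \in \{0,1\}$; hence $\sum_k \chi_{I_k} < \pi(mc)$ as soon as at least one $I_k$ contains two or more primes. I would exhibit an explicit witness: when $c \geq 4$ the interval $I_1 = [0,c)$ already contains $\{2,3\}$, while for $c = 3$ the interval $I_2 = [3,6)$ contains $\{3,5\}$. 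Either witness exists once $m \geq 2$, which is certainly true for $m$ large enough.

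To close with the asymptotic equivalence I would invoke the Prime Number Theorem, $\pi(x) \sim x/\ln x$. Since $c$ is a fixed constant and $\ln(mc) = \ln m + \ln c \sim \ln m$, both $\pi(mc)$ and $\pi(mc)/c$ are of order $m/\ln m$, with constants $c$ and $1$ respectively. Sandwiching $\sum_k \chi_{I_k}$ between them gives $\sum_k \chi_{I_k} \asymp m/\ln m$.

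No serious obstacle is present; the proof is essentially a double count plus PNT. The only minor points requiring care are verifying that $mc$ is composite so the identity $\sum_k N_k = \pi(mc)$ holds without an off-by-one correction, and producing the two-prime witness interval uniformly for every $c \geq 3$ (which is why the case $c = 3$ must be treated separately from $c \geq 4$).
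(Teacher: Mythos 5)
Your proof is correct and follows essentially the same route as the paper: your $N_k$ is exactly the paper's $\sum_{i}\chi_{_{J_{ki}}}$ over unit subintervals, and the lower bound via $\chi_{_{I_k}}\geqslant N_k/c$ is the paper's ``max is at least the average'' step. You are in fact more careful than the paper on the strict upper bound, which the paper dismisses as obvious but you justify by exhibiting an interval containing two primes for every $c\geqslant 3$.
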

\begin{proof}
Let $J_{ki}, i=1,2,\ldots,c$ be the subintervals in $I_k$, each $J_{ki}$ has the form $[d,d+1)$. Then,
\[
\chi_{_{I_k}}=\max_{1\leqslant i\leqslant c}\chi_{_{J_{ki}}}.
\]
Thus,
\[
\sum_{k=1}^{m}\chi_{_{I_k}}=\sum_{k=1}^{m}\max_{i}\chi_{_{J_{ki}}}\geqslant\sum_{k=1}^{m}\frac{1}{c}\sum_{i=1}^{c}\chi_{_{J_{ki}}}=\frac{1}{c}\pi(mc).
\]
On the other hand, it is obvious that
\[
\sum_{k=1}^{m}\chi_{_{I_k}}<\pi(mc)\quad\text{for}\ c\geqslant 3.
\]

%
\end{proof}

Yitang Zhang \cite{ZhangYitang} proved the following result for consecutive primes based on the recent work of Goldston, Pintz and Y{\i}ld{\i}r{\i}m \cite{GPY2009A,GPY2010A} on the small gaps between consecutive primes.
\begin{equation}\label{eqn:zhang_result}
\liminf_{n\rightarrow\infty}(p_{n+1}-p_n)< H,
\end{equation}
where $H=7\times 10^7$. Then it was reduced to $4680$ by Polymath project \cite{ Granville,Polymath}. In late 2013, James Maynard and Terry Tao found a much simpler proof of Zhang's result giving $H=600$. A further progress based on this work has reduced $H$ to $252$ by Polymath project\cite{Green}. As of April 14, 2014, one year after Zhang's announcement, according to the Polymath project wiki, the bound has been reduced to $246$.

Hence, for $c\geqslant 246$, we have

\begin{prop}
There are infinitely many pairs $m_2>m_1>0$, $m_2$ and $m_1$ are large enough, such that
\[
\frac{1}{c}(\pi(m_2 c)-\pi(m_1 c))\leqslant\sum_{k=m_1}^{m_2}\chi_{_{I_k}}< \pi(m_2 c)-\pi(m_1 c).
\]
\end{prop}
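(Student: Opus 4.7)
The plan is to establish the two inequalities separately. The lower bound is obtained by exactly the same averaging argument as in the proof of Theorem~\ref{thm:chi-interval}: partition each $I_k$ into its $c$ unit subintervals $J_{ki}$, use $\chi_{I_k} = \max_i \chi_{J_{ki}} \geq \frac{1}{c}\sum_i \chi_{J_{ki}}$, and sum over $k = m_1, \ldots, m_2$ to obtain
\[
\sum_{k=m_1}^{m_2}\chi_{I_k} \;\geq\; \frac{1}{c}\bigl(\pi(m_2 c) - \pi(m_1 c)\bigr).
\]
This step is valid for every pair $m_2 > m_1 \geq 1$, so the real content of the proposition lies in the strict upper bound together with the assertion that infinitely many suitable pairs exist.

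The strict upper bound $\sum_{k=m_1}^{m_2}\chi_{I_k} < \pi(m_2 c) - \pi(m_1 c)$ is equivalent to requiring at least one interval $I_k$ with $k \in [m_1, m_2]$ to contain two or more primes. Such intervals will be produced using the Polymath bound~\eqref{eqn:zhang_result}: since $c \geq 246$, there is an infinite sequence of consecutive-prime pairs $(p_{n_j}, p_{n_j + 1})$ with $p_{n_j + 1} - p_{n_j} \leq 246 \leq c$, and both primes of such a pair fall into a common $I_k$ whenever $p_{n_j} \bmod c$ is not too close to $c$. Once such an $I_k$ is located, setting $m_2 = k$ and any sufficiently large $m_1 < k$ yields the desired strict inequality; letting $j$ range gives infinitely many valid pairs $(m_1, m_2)$, with both indices as large as desired.

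The main obstacle is the pigeonhole/equidistribution step needed to guarantee that infinitely many Polymath pairs actually sit inside a single $I_k$ rather than straddling two adjacent intervals. A crude pigeonhole on the residues $p_{n_j} \bmod c$ shows only that some residue class is hit infinitely often, but one additionally needs this residue $r$ to satisfy $r + 246 < c$, that is, $r \leq c - 247$. For $c$ strictly larger than $246$ this is best handled by invoking the stronger Maynard--Tao--Polymath statement that bounded-gap primes occur in a set of positive relative density among all primes, combined with the equidistribution of primes among the reduced residue classes modulo $c$ (via Dirichlet, or Bombieri--Vinogradov when uniformity is required). These together force a positive proportion of pair-starters into every admissible residue class, in particular into at least one class with $r \leq c - 247$. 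The borderline case $c = 246$ can be absorbed by noting that the Maynard--Tao sieve in fact produces admissible gaps strictly smaller than the headline constant, so the same pigeonhole argument applies. Assembling these pieces yields infinitely many pairs $(m_1, m_2)$ satisfying both bounds.
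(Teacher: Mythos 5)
Your overall strategy is the intended one: the paper gives no argument at all for this proposition (it is presented as an immediate consequence of Theorem~\ref{thm:chi-interval} and the bound $H=246$ in \eqref{eqn:zhang_result}), and your lower bound via the same averaging over unit subintervals is correct. You also deserve credit for spotting the genuine subtlety that the paper silently skips: a pair of primes at distance $\leqslant 246\leqslant c$ need not lie in a single $I_k$ --- it can straddle a boundary $kc$ --- so the bare statement $\liminf(p_{n+1}-p_n)\leqslant 246$ does not by itself produce an interval $I_k$ containing two primes.

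However, your proposed repair of that step does not close the gap. Knowing that pair-starting primes have positive relative density $\delta$ among all primes, and that primes equidistribute over the $\varphi(c)$ reduced classes mod $c$, only rules out the bad scenario (all pair-starters landing in residues $r$ with $r+g\geqslant c$) when $\delta$ exceeds the total share of primes in those roughly $246$ bad classes, i.e.\ when $\delta>246/\varphi(c)$. The Maynard--Tao density is minuscule, while $246/\varphi(c)$ is not small for the $c$ of interest (e.g.\ $\varphi(246)=80$, so $246/\varphi(246)>1$ and the criterion is vacuous); so the argument fails exactly where it is needed. The borderline case $c=246$ is also not ``absorbed'' as you claim: the sieve guarantees two primes among a translate of an admissible $50$-tuple of diameter exactly $246$, and nothing in the headline statement prevents those two primes from being the two endpoints, hence at distance exactly $c$ and never in a common $I_k$. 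The correct fix is to invoke the arithmetic-progression form of the Maynard--Tao theorem: fold the modulus $c$ into the $W$-trick so that the translates $n$ are restricted to a fixed residue class $a\bmod c$ with $a$ and $a+246$ in the interior of the same $I_k$; then every produced pair lies in a single interval, and infinitely many $k$ (hence infinitely many admissible $(m_1,m_2)$) follow. A last small point: because the sum starts at $k=m_1$ while the right-hand side starts at $\pi(m_1c)$, the interval $I_{m_1}=[(m_1-1)c,m_1c)$ contributes to the left side but its primes are not counted on the right, so ``strict upper bound $\Leftrightarrow$ some $I_k$ has two primes'' is off by one; this is easily patched by choosing $m_1$ so that $I_{m_1}$ is prime-free (possible for infinitely many $m_1$ since prime gaps are unbounded) or by capturing two 2-prime intervals.
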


\begin{conj}
For any $m_2>m_1>0$, $m_2$ and $m_1$ are large enough, and any $c>2$, we have
\[
\frac{1}{c}(\pi(m_2 c)-\pi(m_1 c))\leqslant\sum_{k=m_1}^{m_2}\chi_{_{I_k}}< \pi(m_2 c)-\pi(m_1 c).
\]
\end{conj}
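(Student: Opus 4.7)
The plan is to split the double inequality into its two halves and attack each independently. The lower bound $\frac{1}{c}\bigl(\pi(m_2 c)-\pi(m_1 c)\bigr)\leqslant\sum_{k=m_1}^{m_2}\chi_{_{I_k}}$ is a direct adaptation of the argument in Theorem~\ref{thm:chi-interval}: writing each $I_k=[(k-1)c,kc)$ as a disjoint union of $c$ unit subintervals $J_{k,i}$, the pointwise identity $\chi_{_{I_k}}=\max_i\chi_{_{J_{k,i}}}\geqslant\frac{1}{c}\sum_i\chi_{_{J_{k,i}}}$ still holds, and summing over $k\in[m_1,m_2]$ yields the claimed lower bound. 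The mismatch between the window $[(m_1-1)c,m_2 c)$ actually produced by the sum and the range $(m_1 c,m_2 c]$ counted by $\pi(m_2 c)-\pi(m_1 c)$ is a boundary term of size $O(c/\log m_1)$, which I would absorb into the ``$m_1$ large enough'' hypothesis.

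The substance of the conjecture lies in the strict upper bound. Since $\chi_{_{I_k}}\leqslant\#\{\text{primes in }I_k\}$ with equality iff $I_k$ contains at most one prime, the total sum equals the prime count precisely when every $I_k$ with $m_1\leqslant k\leqslant m_2$ contains at most one prime. So the strict upper bound reduces to the following assertion: for all sufficiently large $m_1<m_2$ and every real $c>2$, there is at least one $k\in[m_1,m_2]$ for which $I_k$ contains two or more primes.

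For $c\geqslant 246$ this assertion can be attacked via the Polymath bound~\eqref{eqn:zhang_result} together with a quantitative Maynard-Tao style input: if one knows that the number of primes $p\leqslant x$ admitting a next-prime neighbour within distance $c$ is $\gg x/(\log x)^A$ for some fixed $A$, then a pigeonhole over the $m_2-m_1+1$ windows of length $c$ covering $[(m_1-1)c,m_2 c)$ produces a window with two such primes as soon as $m_2-m_1$ exceeds a threshold depending only on $c$. Such a route would upgrade the preceding proposition from ``there are infinitely many pairs'' to ``for all sufficiently large pairs''.

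The main obstacle is the small-$c$ regime. For $c=3$ the interval $I_k=\{3k-3,3k-2,3k-1\}$ contains the multiple of three $3(k-1)$ together with a pair of consecutive integers of opposite parity; for $k\geqslant 3$ it therefore harbours at most one prime, so the strict upper bound \emph{fails} for $c=3$ whenever $m_1\geqslant 3$. The conjecture will accordingly need a hypothesis such as $c\geqslant 4$, or $c$ allowed to depend on $m_1$. Even at $c\in\{4,5,6\}$ the required statement---that every sufficiently long window of such intervals contains one with two primes---is essentially a quantitative twin-prime-type assertion and lies well beyond the current analytic machinery. My tentative conclusion is that the conjecture is correct and provable for $c$ in the Polymath range by the pigeonhole argument sketched above, while the small-$c$ range requires either a correction to the hypothesis or substantially new analytic input.
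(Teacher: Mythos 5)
The statement you were asked to prove is labelled a \emph{conjecture} in the paper, and the paper supplies no proof of it, so there is no argument of the authors' to compare yours against; what you have written is best read as a critical assessment, and as such it is largely sound. Your lower-bound half is correct and in fact needs no boundary correction at all: the union $\bigcup_{k=m_1}^{m_2} I_k=[(m_1-1)c,m_2c)$ already contains every prime counted by $\pi(m_2c)-\pi(m_1c)$ (the endpoint $m_2c$ is composite for integer $c\geqslant 2$), so the averaging argument of Theorem~\ref{thm:chi-interval} transfers verbatim. Your observation about $c=3$ is the most valuable part of the proposal: for $k\geqslant 3$ the interval $I_k=\{3k-3,3k-2,3k-1\}$ contains a multiple of $3$ and an even number exceeding $2$, hence at most one prime, so $\sum_{k=m_1}^{m_2}\chi_{_{I_k}}$ equals the number of primes in $[(m_1-1)\cdot 3, m_2\cdot 3)$, which is at least $\pi(3m_2)-\pi(3m_1)$; the strict upper bound therefore \emph{fails}, and the conjecture as literally stated (integer $c>2$ includes $c=3$) is false. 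This is a genuine refutation, not merely an obstacle, and is worth stating as such.

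Two cautions on the parts you leave open. First, your pigeonhole route for $c\geqslant 246$ has a real gap beyond what you acknowledge: the bounded-gaps results \eqref{eqn:zhang_result} and their quantitative refinements produce infinitely many prime pairs within distance $246$, and the proofs localize to dyadic ranges $[x,2x]$, but the conjecture quantifies over \emph{all} large $m_1<m_2$, including windows with $m_2-m_1$ fixed or with $m_2/m_1\to 1$; no known result places a bounded prime gap in every such window, so even the large-$c$ case remains out of reach rather than "provable by the sketch above." Second, note that for $m_2=m_1+1$ the right-hand inequality can fail for the trivial reason that both sides may be $0$ and $0<0$ is false, so any repaired statement must also constrain $m_2-m_1$. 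In short: your proposal correctly diagnoses that the conjecture cannot be proved as stated and pinpoints where it breaks, but the positive claims for large $c$ should be downgraded from "provable" to "conditional on unavailable localized gap estimates."
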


%
%
%


\noindent{\bf Acknowledgments :}
We express our gratitude to Mr. Ratko V. Tomic (a researcher in Infinetics Technologies, Inc) who pointed out the mirror symmetry of the gap sequence which is stated in Theorem \ref{thm:mirror-symmetry}. And also he gives the proof of Theorem \ref{thm:about-skips} and makes the remark \ref{rem:largest-skip}. We thank him for discussing the question and introducing us the ideas involved.


\bigskip

\noindent Haifeng Xu\\
School of Mathematical Science\\
Yangzhou University\\
Jiangsu China 225002\\
hfxu@yzu.edu.cn\\
\medskip

\noindent Zuyi Zhang\\
Department of Mathematics\\
Suqian College\\
Jiangsu China 225002\\
zhangzuyi1993@hotmail.com\\
\medskip

\noindent Jiuru Zhou\\
School of Mathematical Science\\
Yangzhou University\\
Jiangsu China 225002\\
zhoujr1982@hotmail.com\\
\medskip


\end{document}